\theoremstyle{plain}
\newtheorem{thm}{Theorem}[section]
\newtheorem{note}{Note}[section]
\theoremstyle{definition}
\newtheorem{defn}{Definition}[section]
\begin{document}

\setcounter {page}{1}
\title{Rough $I$-statistical convergence of sequences}

\author[P. Malik, M. Maity and AR. Ghosh]{ Prasanta Malik*, Manojit Maity** and Argha Ghosh*\ }
\newcommand{\acr}{\newline\indent}
\maketitle
\address{{*\,} Department of Mathematics, University of Burdwan, Golapbag, Burdwan-713104,
West Bengal, India.
                Email: pmjupm@yahoo.co.in. , buagbu@yahoo.co.in \acr
           {**\,} 25,Teachers Housing Estate, P.O.- Panchasayar, Kolkata-700094, India. Email:
           manojitm@research.jdvu.ac.in\\}

\maketitle
\begin{abstract}
The concept of rough convergence of sequences was first introduced by H.X. Phu \cite{Ph2}. Gradually a lot of work had been done on this concept and a new version of statistical convergence had come through the work of S. Aytar \cite{Ay2}. Through further progress of this concept, $I$-statistical convergence was then introduced by U. Yamanc and M. G$\bar{u}$rdal \cite{Ya}. The concept of rough ideal statistical convergence of a sequence was first defined by Das, Savas and Ghosal \cite{Da4}.
Here in this paper using the concept of Das et.al.  we prove some results on rough ideal statistical convergence and also we introduce the notion of rough ideal limit set and discuss some topological aspects on this set.

\end{abstract}
\author{}
\maketitle
{ Key words and phrases : Ideal statistical convergent, Rough ideal statistical convergent,
$I$-statistical limit set, \textit{I}- statistical bounded.}

\textbf {AMS subject classification (2010) : 40A05, 40G99} .  \\

\baselineskip 1cm

\section{\textbf{Introduction and background}}

The notion of convergence of real sequences has been extended to statistical convergence by Fast \cite{Fa} also independently by Schoenberg \cite{Sc} using the natural density of $\mathbb N$. The
study of statistical convergence become one of the most active research area in summability theory after the works of Fridy \cite{Fr1}, Salat \cite{Sal} and many others.

 The notion of statistical convergence has been further generalized to $I$-convergence by Kostyrko et.al. \cite{Ko1} using ideals of $\mathbb N$. A lot of work on $I$-convergence can be found in \cite{Da1, Da3} and many others. Recently Das et.al. in \cite{Da3} introduced the notion of ideal statistical convergence which is a new generalization of the notion of statistical convergence. More investigation and application of this notion can be found in (see \cite{Da4}, \cite{Da5}, \cite{Sav}, \cite{Ya}).

The concept of rough  convergence was first introduced by Phu \cite{Ph2}.
Further this notion of rough convergence has been extended to rough statistical convergence by Aytar \cite{Ay2} using the notion of natural density of $\mathbb N$ in a similar way as usual notion of convergence was extended to statistical convergence. More work can be found in (\cite{Ma1}, \cite{Ma2}). Further the notion of rough statistical convergence was generalized to rough $I$-convergence by Pal et.al. \cite{Pal} using ideals of $\mathbb N$. More investigation and application on this line can be found in (\cite{Ma3}, \cite{Pal}). So naturally one can think if the new notion of $I$-statistical convergence can be introduced in the theory of rough convergence.

 In this chapter we introduce and study the notion of rough $I$-statistical convergence in a normed linear space $\left(X,\left\|.\right\|\right)$ which naturally extends the notions of rough convergence as well as rough statistical convergence in a new way. We also define the set of all rough $I$-statistical limits of a sequence and proved some topological properties of this set.

\section{\textbf{Basic Definitions and Notations}}
In this section we recall some basic definitions and notations.
\begin{defn}
 Let $X\neq\phi $. A class $ I $ of subsets of $X$ is said to be
an ideal in X provided, $I$ satisfies the conditions:
\\(i)$\phi \in I$,
\\(ii)$ A,B \in I \Rightarrow A \cup B\in I,$
\\(iii)$ A \in I, B\subset A \Rightarrow B\in I$.
\end{defn}

An ideal $I$ in a non-empty set $X$ is called non-trivial if $ X \notin I.$
\begin{defn}
 Let $X\neq\phi  $. A non-empty class $\mathbb F $ of subsets of $X$ is
said to be a filter in $X$ provided that:
\\(i)$\phi\notin \mathbb F $,
\\(ii) $A,B\in\mathbb F \Rightarrow A \cap B\in\mathbb F,$
\\(iii)$ A \in\mathbb F, B\supset A \Rightarrow B\in\mathbb F$.
\end{defn}
\begin{defn}
 Let $I$ be a non-trivial ideal in a non-empty set $ X$.
 Then the class $\mathbb F(I)$$ = \left\{M\subset X : \exists A \in I ~such~~ that ~M = X\setminus A\right\}$
is a filter on $X$. This filter $\mathbb F(I) $ is called the filter associated with $I$.
\end{defn}
 A non-trivial ideal $I$ in $X(\neq \phi)$ is called admissible if $\left\{x\right\} \in I$ for each $x \in X$.
Throughout the paper we take $I$ as a non-trivial admissible ideal in $\mathbb{N} $ unless otherwise mentioned.
\begin{defn}\cite{Da4}
Let $x=\{x_k\}_{k \in \mathbb{N}}$ be a sequence of real numbers.
Then $x$ is said to be
 $I$-statistically convergent to $ \xi $
if for any $\varepsilon > 0$ and $ \delta > 0$
\begin{center}
$\{ n \in \mathbb{N}: \frac{1}{n}|\{ k \leq n: \parallel x_k - \xi
\parallel \geq  \varepsilon\}| \geq \delta \} \in I $.
\end{center}
In this case we write $ {I\mbox{-}st\mbox{-}\lim}~ x = \xi $
\end{defn}
\begin{defn}\cite{Ko1}
Let $x=\{x_n\}_{n \in \mathbb{N}}$ be a sequence of real numbers.
Then $x$ is said to be $I$-convergent to $ \xi $
if for any $\varepsilon > 0$
\begin{center}
$\{ n : \parallel x_n - \xi
\parallel \geq  \varepsilon\} \in I $.
\end{center}
In this case we write $ I-\underset{n\rightarrow \infty}{\lim}~ x_n = \xi $
\end{defn}
\begin{defn}\cite{Da5}
A subset $K$ of $\mathbb N$ is said to have $I$-natural density
$d_I(K)$ if
 \begin{center}
 $d_I(K)=I-\underset{n\rightarrow \infty}{\lim}\frac{\left|K(n)\right|}{n}$
 \end{center}
 where $K(n)=\left\{j\in K:j\leq n\right\}$ and $\left|K(n)\right|$ represents the number of elements in $K(n)$.
\end{defn}
\begin{note}
From the above definition, it is clear that, if $d(A) = r, A \subset \mathbb{N}$, then $d_I (A) = r $
for any nontrivial admissible ideal $I$ in $\mathbb{N}$.
\end{note}
\begin{defn}\cite{Ph2}
If $ x = \{ x_{k}\}_{ k \in \mathbb{N}}$ be a sequence in some
normed linear space $ ( X, \| . \| )$ and $r$ be a non negative
real number then $x$ is said to be $r$-convergent to $ \xi \in X
$, if for any $ \varepsilon > 0 $, there exists $ N \in \mathbb{N}$
such that $\parallel {x_{k}-\xi}\parallel   < r + {\varepsilon}
~~~~~~for ~ all ~ k \geq N$.
\end{defn}
\begin{defn}\cite{Ay2}
Let $x = \{x_{k}\}_{k\in \mathbb{N}}$ be a sequence in a normed
linear space $(X, \parallel.\parallel)$ and $r$ be a non negative
real number. $x$ is said to be $r$- statistically convergent to
$\xi$, denoted by $x \overset{r-st}\longrightarrow \xi$, if for
any $\varepsilon > 0$ we have $d(A(\varepsilon)) = 0$, where
$A(\varepsilon) = \{ k \in \mathbb{N} : \parallel x_{k} - \xi
\parallel \geq r + \varepsilon \}$. In this case $\xi$ is called
the $r$-statistical limit of $x$.
\end{defn}

\begin{defn}
Let $x=\{x_k\}_{k \in \mathbb{N}}$ be a sequence in  a normed
linear space $\left(X,\left\|.\right\|\right)$ and $r$ be a non
negative real number. Then $x$ is said to be rough
$I$-statistically convergent to $ \xi $ or
$r\mbox{-}I$-statistically convergent to $\xi$ if for any
$\varepsilon > 0$ and $ \delta > 0$
\begin{center}
$\{ n \in \mathbb{N}: \frac{1}{n}|\{ k \leq n: \parallel x_k - \xi \parallel \geq r+ \varepsilon\}|
\geq \delta \} \in I $.
\end{center}
\end{defn}
In this case $\xi$ is called the rough $I$-statistical limit of
$x=\{x_k\}_{k \in \mathbb{N}}$ and we denote it by $ x_k
\overset{r\mbox{-}I \mbox{-}st}\longrightarrow \xi $.

Here $r$ in the above definition is called the roughness degree of
the rough $I$-statistical convergence. If $r=0$ we obtain the
notion of $I \mbox{-}$statistical convergence. But our main
interest is when $ r > 0 $. It may happen that a sequence $ x =
\{x_k\}_{k \in \mathbb{N}} $ is not $ I $-statistically convergent
in the usual sense, but there exists a sequence $ y = \{y_k\}_{k
\in \mathbb{N}} $, which is $ I $-statistically convergent and
satisfying the condition $\| x_k -y_k \|\leq r $ for all  $k$. Then $ x $ is
rough $I$-statistically convergent to the same limit.

From the above definition it is clear that the rough
$I$-statistical limit of a sequence is not unique. So we consider
the set of rough $I$-statistical limits of a sequence $x$ and we
use the notation $I\mbox{-}st\mbox{-}\mbox{LIM}_x^r$ to denote the
set of all rough $I$-statistical limits of a sequence $x$. We say
that a sequence $x$ is rough $I$-statistically convergent if
$I\mbox{-}st\mbox{-}\mbox{LIM}_x^r\neq \phi$.

Throughout the paper $x$ denotes the sequence $\{x_k\}_{k \in
\mathbb{N}}$ and $X$ denotes a normed linear space $(X, \|. \|)$.


\section{\textbf{Main Results}}

In this section we discuss some basic properties of rough $I$-statistical convergence of sequences.


\begin{thm}
Let $x = \{x_k \}_{ k \in \mathbb{N}} $ be a sequence in $X$ and
$r>0 $. Then diam ($I\mbox{-}st\mbox{-}\mbox{LIM}_x^r) \leq 2r $.
In particular if $x$ is $I$-statistically convergent to $\xi$,
then $I\mbox{-}st\mbox{-}\mbox{LIM}_x^r
\supset{\overline{{B_r}(\xi)}}(=\left\{y\in
X:\left\|y-\xi\right\|\leq r\right\})$ and so diam
($I\mbox{-}st\mbox{-}\mbox{LIM}_x^r) = 2r $.
\end{thm}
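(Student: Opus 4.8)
The plan is to prove the two assertions separately: the diameter bound $\mbox{diam}(I\mbox{-}st\mbox{-}\mbox{LIM}_x^r) \leq 2r$ by contradiction, and the reverse inclusion by a direct density-comparison argument using the ideal axioms.

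For the bound, suppose there were points $y, y' \in I\mbox{-}st\mbox{-}\mbox{LIM}_x^r$ with $\|y - y'\| > 2r$. Choose $\varepsilon > 0$ small enough that $2(r + \varepsilon) < \|y - y'\|$. By the triangle inequality, every index $k$ must satisfy at least one of $\|x_k - y\| \geq r + \varepsilon$ or $\|x_k - y'\| \geq r + \varepsilon$, for otherwise $\|y - y'\| \leq \|x_k - y\| + \|x_k - y'\| < 2(r + \varepsilon)$, a contradiction. Hence for every $n$ the two counting sets cover $\{1, 2, \ldots, n\}$, so $\frac{1}{n}|\{k \leq n : \|x_k - y\| \geq r + \varepsilon\}| + \frac{1}{n}|\{k \leq n : \|x_k - y'\| \geq r + \varepsilon\}| \geq 1$, and therefore at least one of these two averages is $\geq \frac{1}{2}$. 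Taking $\delta = \frac{1}{2}$ and writing
\[
A = \{n : \tfrac{1}{n}|\{k \leq n : \|x_k - y\| \geq r + \varepsilon\}| \geq \delta\}, \quad A' = \{n : \tfrac{1}{n}|\{k \leq n : \|x_k - y'\| \geq r + \varepsilon\}| \geq \delta\},
\]
we conclude $\mathbb{N} = A \cup A'$. Since $y$ and $y'$ are rough $I$-statistical limits, $A, A' \in I$, so by the finite-union axiom $\mathbb{N} = A \cup A' \in I$, contradicting the non-triviality of $I$.

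For the inclusion, assume $I\mbox{-}st\mbox{-}\lim x = \xi$ and fix $y$ with $\|y - \xi\| \leq r$. For any $\varepsilon > 0$, the inequality $\|x_k - y\| \geq r + \varepsilon$ forces $\|x_k - \xi\| \geq \|x_k - y\| - \|y - \xi\| \geq \varepsilon$, giving the inclusion $\{k \leq n : \|x_k - y\| \geq r + \varepsilon\} \subseteq \{k \leq n : \|x_k - \xi\| \geq \varepsilon\}$ for every $n$, and hence a comparison of the associated averages. Thus, for each $\delta > 0$,
\[
\{n : \tfrac{1}{n}|\{k \leq n : \|x_k - y\| \geq r + \varepsilon\}| \geq \delta\} \subseteq \{n : \tfrac{1}{n}|\{k \leq n : \|x_k - \xi\| \geq \varepsilon\}| \geq \delta\}.
\]
The right-hand set lies in $I$ because $x$ is $I$-statistically convergent to $\xi$, so the hereditary axiom (iii) of the ideal places the left-hand set in $I$ as well; this shows $y \in I\mbox{-}st\mbox{-}\mbox{LIM}_x^r$, i.e. $\overline{B_r(\xi)} \subseteq I\mbox{-}st\mbox{-}\mbox{LIM}_x^r$. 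Finally, picking a unit vector $u$ in $X$ and the points $\xi \pm r u$ shows $\mbox{diam}(\overline{B_r(\xi)}) = 2r$, so the inclusion forces $\mbox{diam}(I\mbox{-}st\mbox{-}\mbox{LIM}_x^r) \geq 2r$, which together with the first part yields equality.

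I expect the only genuine subtlety to lie in the first part, specifically in recognizing that the correct threshold is $\delta = \frac{1}{2}$ and in checking that the triangle-inequality observation upgrades to the exact covering $A \cup A' = \mathbb{N}$, after which the finite-union and non-triviality axioms close the argument. The inclusion is then routine, relying only on the monotone comparison of counting sets and the hereditary property of $I$.
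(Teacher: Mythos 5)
Your proof is correct, and its first half takes a genuinely different (and more elementary) route than the paper's. For the bound $\mathrm{diam}(I\mbox{-}st\mbox{-}\mathrm{LIM}_x^r)\leq 2r$ the paper fixes the same $y,z$ with $\|y-z\|>2r$ and the same $\varepsilon$, but then works through the arithmetic of $I$-limits: from $\frac{1}{n}|\{k\leq n: k\in A\cup B\}|\leq \frac{1}{n}|\{k\leq n: k\in A\}|+\frac{1}{n}|\{k\leq n: k\in B\}|$ it concludes that the $I$-limit of the left-hand densities is $0$, puts $K=\{n: \frac{1}{n}|\{k\leq n:k\in A\cup B\}|\geq \frac{1}{2}\}\in I$, picks a witness $n_0\notin K$, extracts an index $k_0\notin A\cup B$, and reaches the numeric contradiction $\|y-z\|<\|y-z\|$. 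You instead run the contrapositive of the same triangle-inequality observation: every $k$ lies in at least one of the two bad index sets, so for each $n$ one of the two averages is $\geq\frac{1}{2}$, whence $\mathbb{N}=A\cup A'$ with $A,A'\in I$ by the definition of rough $I$-statistical limit at $\delta=\frac{1}{2}$, and $\mathbb{N}\in I$ contradicts non-triviality. Your version uses only the ideal axioms and never needs the (true but unproved in the paper) fact that $I$-limits of density sequences respect sums and order, while the paper's witness-extraction pattern (choose $n\notin A$, then an index) is the template it reuses in its later theorems, so it buys uniformity across the paper. For the inclusion $\overline{B_r(\xi)}\subseteq I\mbox{-}st\mbox{-}\mathrm{LIM}_x^r$ the two arguments agree up to contraposition: the paper bounds the density of the good set $\{k\leq n:\|x_k-y\|<r+\varepsilon\}$ below by $1-\delta$ via $\{k\leq n:\|x_k-\xi\|<\varepsilon\}$, whereas you include the bad set for $y$ directly in the bad set for $\xi$ and invoke heredity of $I$; yours is shorter and avoids the complement bookkeeping. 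One pedantic point: your closing step of picking a unit vector $u$ requires $X\neq\{0\}$ (the paper silently needs the same when it asserts $\mathrm{diam}\geq 2r$ from the ball inclusion), so the equality $\mathrm{diam}=2r$ holds under that standing assumption.
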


\begin{proof}
If possible, let diam ($I\mbox{-}st\mbox{-}\mbox{LIM}_x^r) > 2r $.
Then there exist $ y,z \in I\mbox{-}st\mbox{-}\mbox{LIM}_x^r $
such that $\| y - z \| > 2r $. Now choose $ \varepsilon > 0$ so
that $ \varepsilon < \frac{\| y - z \|}{2} - r $. Let $ A = \{k
\in \mathbb{N} : \|x_k - y \| \geq r +\varepsilon  \}$ and $ B =
\{k \in \mathbb{N} : \|x_k - z \| \geq r + \varepsilon \}$. Then
\begin{center}
$ \frac{1}{n} |\{ k \leq n : k \in A \cup B \}| \leq
\frac{1}{n}|\{ k \leq n : k \in A\}| + \frac{1}{n}|\{ k \leq n : k
\in B \}|$
\end{center}
and so by the property of $I$-convergence we have,
\begin{center}
 $I \mbox{-}\underset{n \rightarrow \infty}{\lim}
\frac{1}{n}|\left\{k\leq n:k\in A\cup B\right\}| \leq I
\mbox{-}\underset{n \rightarrow \infty}{\lim}\frac{1}{n}|\{k \leq
n: k \in A \}| + I \mbox{-}\underset{n \rightarrow
\infty}{\lim}\frac{1}{n}|\{k \leq n: k \in B \}| = 0 $.
\end{center}
Thus
\begin{center}
$\{n \in \mathbb{N} : \frac{1}{n} |\{ k \leq n : k \in A \cup B \}| \geq \delta\} \in I $ for all $\delta>0$.
\end{center}
 Let
\begin{center}
$ K = \{ n \in \mathbb{N} : \frac{1}{n} |\{k \leq n : k \in A \cup B \}| \geq \frac{1}{2}\} $.
\end{center}
Clearly $K\in I$. Now choose $ n_0 \in \mathbb{N} \setminus K $. Then
\begin{center}
$ \frac{1}{n_0}|\{k \leq n_0 : k \in A \cup B \}| < \frac{1}{2}$.
\end{center}
 So
 \begin{center}
 $\frac{1}{n_0} | \{k \leq n_0: k \notin A \cup B \}| \geq 1- \frac{1}{2} = \frac{1}{2} $
\end{center}
 i.e., $ \{ k : k \notin A \cup B \}$ is a nonempty set.\\
Take $ k_0 \in \mathbb{N}$
such that $ k_0 \notin A \cup B$. Then $ k_0 \in A^c \cap B^c$ and
hence $ \|x_{k_0} - y \| < r + \varepsilon $ and $ \| x_{k_0} - z
\| < r + \varepsilon $. So
\begin{center}
$\|y - z \| \leq \|x_{k_0} -y \| + \|x_{k_0} - z \| \leq 2(r + \varepsilon) < \| y - z \|$,
\end{center}
 which is
absurd. Therefore diam ($I\mbox{-}st\mbox{-}\mbox{LIM}_x^r) \leq
2r $.

Now if $ I\mbox{-}st\mbox{-}\lim x = \xi $, we proceed as follows.
Let $ \varepsilon
> 0 $ and $ \delta > 0 $ be given. Then
\begin{center}
$ A = \{n \in \mathbb{N}: \frac{1}{n}|\{ k \leq n : \|x_k - \xi \|\geq \varepsilon \}|\geq \delta \} \in I $.
\end{center}
Then for $ n \notin A $ we have
\begin{center}
$\frac{1}{n}|\{k \leq n : \|x_k - \xi \| \geq \varepsilon \} < \delta $,
\end{center}
 i.e.,
\begin{eqnarray}
\frac{1}{n} |\{k \leq n : \| x_k - \xi \| < \varepsilon \} \geq 1 - \delta.
\end{eqnarray}
Now for each $y\in {\overline{{B_r}(\xi)}}=\left\{y\in X:\left\|y-\xi\right\|\leq r\right\}$ we have
\begin{eqnarray}
\|x_k - y \| \leq \| x_k - \xi \| + \|\xi - y \| \leq \|x_k - \xi \| + r.
\end{eqnarray}
Let $B_n = \left\{ k \leq n : \|x_k - \xi \| < \varepsilon \right\}$.
Then for $ k \in B_n$ we have $\|x_k -y \| < r + \varepsilon$.
Hence
\begin{center}
$ B_n \subset \{ k \leq n: \|x_k - y \|< r + \varepsilon\}$.
\end{center}
This implies,
\begin{center}
$ \frac{|B_n|}{n} \leq
\frac{1}{n}|\{k \leq n: \|x_k -y \| < r + \varepsilon\}|$
\end{center}
 i.e.,
\begin{center}
$\frac{1}{n}|\{k \leq n:\|x_k - y\|< r + \varepsilon \}| \geq 1 -\delta $.
\end{center}
Thus for all $ n \notin A $,
\begin{center}
$ \frac{1}{n} |k \leq n: \|x_k - y \|\geq r + \varepsilon \}|<1-(1 - \delta) $.
\end{center}
Hence we have
\begin{center}
$\{n \in \mathbb{N}: \frac{1}{n} |\{k \leq n:\|x_k - y \| \geq r +\varepsilon \}| \geq \delta\} \subset A $.
\end{center}
Since $ A \in I $, so
\begin{center}
$\{n \in \mathbb{N}: \frac{1}{n} |\{k \leq n:\|x_k - y \| \geq r + \varepsilon \}| \geq \delta\} \in I $.
\end{center}
This shows that $ y \in I\mbox{-}st\mbox{-}LIM_x^r$. Therefore
$I\mbox{-}st\mbox{-}\mbox{LIM}_x^r \supset{\overline{{B_r}(\xi)}}$
and consequently diam$(I\mbox{-}st\mbox{-}LIM_x^r) \geq 2r$. Hence
diam$(I\mbox{-}st\mbox{-}LIM_x^r) = 2r$.
\end{proof}

In the paper \cite{Ph2} H.X. Phu has already shown that for any
subsequence $ {x'} = \{x_{n_k}\}_{k \in \mathbb{N}}$ of a sequence
$x=\left\{x_k\right\}_{k\in\mathbb N}$, $ LIM_x^r \subseteq
LIM_{x'}^r $.

In the following theorem we show that the rough $I$-statistical analogue of Phu's result holds for some kind of subsequences.
\begin{thm}
Let $ x=\left\{x_k\right\}_{k\in\mathbb N} $ be a sequence in $X$
and $ r> 0 $ be any real number. If $ x $ has a subsequence $ x' =
\{ x_{j_k}\}_{k\in\mathbb N}$ satisfying the condition $ I\mbox{-}\underset{n
\rightarrow \infty}{\lim} \frac{1}{n} | \{ j_k \leq n : k \in
\mathbb{N}\}|=1 $ then $ I\mbox{-}st\mbox{-}LIM_x^r \subset
I\mbox{-}st\mbox{-}LIM_{x'}^r $.
\end{thm}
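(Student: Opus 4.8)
The plan is to prove the inclusion one limit at a time: I fix an arbitrary $\xi\in I\mbox{-}st\mbox{-}LIM_x^r$ and show $\xi\in I\mbox{-}st\mbox{-}LIM_{x'}^r$. After also fixing $\varepsilon>0$ and $\delta>0$, the task reduces to establishing
$$\left\{m\in\mathbb N:\frac{1}{m}\,|\{k\le m:\|x_{j_k}-\xi\|\ge r+\varepsilon\}|\ge\delta\right\}\in I.$$
It is convenient to record the hypotheses as two $I$-limits. Writing $a_n=\frac{1}{n}|\{k\le n:\|x_k-\xi\|\ge r+\varepsilon\}|$ and $d_n=\frac{1}{n}|\{j_k\le n:k\in\mathbb N\}|$, the assumption $\xi\in I\mbox{-}st\mbox{-}LIM_x^r$ is exactly $I\mbox{-}\lim a_n=0$, while the hypothesis on the subsequence is that $I\mbox{-}\lim d_n=c$ exists and $c\ne0$, hence $c>0$.

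The core is a comparison of the two counting fractions at matching indices. At the original index $n=j_m$ one has $|\{j_k\le j_m\}|=m$, that is $m=j_m\,d_{j_m}$; and since the first $m$ terms of $x'$ are $x_{j_1},\dots,x_{j_m}$ with $j_1,\dots,j_m\le j_m$, the number of these that lie at distance $\ge r+\varepsilon$ from $\xi$ is at most the number of terms of $x$ among the first $j_m$ lying at distance $\ge r+\varepsilon$ from $\xi$. Dividing by $m$ and using $m=j_m d_{j_m}$ gives the key inequality
$$\frac{1}{m}\,|\{k\le m:\|x_{j_k}-\xi\|\ge r+\varepsilon\}|\ \le\ \frac{j_m\,a_{j_m}}{m}\ =\ \frac{a_{j_m}}{d_{j_m}}.$$
Because $I\mbox{-}\lim a_n=0$ and $I\mbox{-}\lim d_n=c>0$, the quotient rule for $I$-limits yields $I\mbox{-}\lim\frac{a_n}{d_n}=0$, so that $G:=\{n\in\mathbb N:\frac{a_n}{d_n}\ge\delta\}\in I$.

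What remains is to transfer this from the original index $n$ to the subsequence index $m$, and this is where I expect the real difficulty. By the key inequality the set to be controlled is contained in $\{m:\frac{a_{j_m}}{d_{j_m}}\ge\delta\}=\{m:j_m\in G\}$, the preimage of $G\in I$ under the enumeration $k\mapsto j_k$. Such a preimage need not itself lie in $I$ — the Example immediately preceding the theorem, where the analogous density is $0$, shows that the conclusion genuinely fails without the density hypothesis — so the positivity $c>0$ must be used in an essential, quantitative way rather than through a formal pullback. The route I would pursue is to argue directly on $\{m:b_m\ge\delta\}$, where $b_m$ denotes the left-hand side of the key inequality: since $n\mapsto|\{k\le n:\|x_k-\xi\|\ge r+\varepsilon\}|$ is nondecreasing, one has $b_m\le\frac{N a_N}{m}$ for every $N\ge j_m$, so it would suffice, for each relevant $m$, to locate an index $N\ge j_m$ that is not much larger than $j_m$ and at which $a_N$ is small. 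Here the bound $d_n>c/2$ (valid on a set in $\mathbb F(I)$) keeps the ratio $j_m/m$ under control, and I would combine this with the monotonicity of the bad-count to confine $\{m:b_m\ge\delta\}$ to $I$. Securing this index selection uniformly in $m$, so that the positivity of $c$ truly forces the offending $m$-set into $I$, is the delicate point on which the whole argument turns.
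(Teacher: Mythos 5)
Your preparatory work is correct and, up to its final step, sharper than what the paper itself offers: the reduction of $\xi\in I\mbox{-}st\mbox{-}LIM_x^r$ to $I\mbox{-}\lim a_n=0$, the identity $m=j_m d_{j_m}$, the key inequality $b_m\le a_{j_m}/d_{j_m}$ (where $b_m=\frac{1}{m}|\{k\le m:\|x_{j_k}-\xi\|\ge r+\varepsilon\}|$), and the quotient rule giving $G=\{n: a_n/d_n\ge\delta\}\in I$ are all sound. But the proposal then stops exactly where the mathematical content lies, and you say so yourself: what must be placed in $I$ is $\{m: b_m\ge\delta\}$, and all you obtain is its containment in the pullback $\{m: j_m\in G\}$, which for a general admissible ideal escapes control, since membership of $G$ in $I$ says nothing about where its elements sit and the hypothesis $I\mbox{-}\lim d_n=c>0$ constrains the $d_n$ only modulo an $I$-set, not pointwise. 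Your fallback plan (find $N\ge j_m$, not much larger than $j_m$, with $a_N$ small) cannot be secured uniformly in the offending $m$, and in fact the step cannot be filled in the stated generality. Sketch: let $I$ consist of all sets whose intersection with $\mathbb{N}\setminus A$ is finite, where $A$ is a union of widely spaced blocks $[t_i,t_i^2]$; place a run of about $t_i/4$ bad terms at the start of each block, select all of them into the subsequence, and keep the selection at density $\frac{1}{2}$ elsewhere. Then $\xi\in I\mbox{-}st\mbox{-}LIM_x^r$ and $I\mbox{-}\lim d_n=\frac{1}{2}\neq 0$, yet $b_m\ge\frac{1}{3}$ at indices $m\approx\frac{3}{4}t_i$, which lie outside $A$, so $\{m: b_m\ge\frac{1}{3}\}\notin I$.

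It is worth knowing how the paper's own proof relates to this: it does not fill the gap you identified — it sidesteps it. The published argument never re-indexes by $m$ at all; it expresses the failure of $\xi\in I\mbox{-}st\mbox{-}LIM_{x'}^r$ through the positional counts $\frac{1}{n}|\{j_k\le n:\|x_{j_k}-\xi\|\ge r+\varepsilon'\}|$, measured along the original index $n$, and concludes at once from the termwise inequality $\frac{1}{n}|\{j_k\le n:\|x_{j_k}-\xi\|\ge r+\varepsilon'\}|\le\frac{1}{n}|\{k\le n:\|x_k-\xi\|\ge r+\varepsilon'\}|$; on that reading the density hypothesis on $\{j_k\}$ is never actually invoked. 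Under the natural reading you adopted — $x'$ as a sequence in its own right, which is the reading forced by the paper's preceding Example computing $I\mbox{-}st\mbox{-}LIM_{x'}^r=\emptyset$ — your inequality $b_m\le a_{j_m}/d_{j_m}$ is the correct quantitative bridge between the two ways of counting, and the re-indexing difficulty you isolate is genuine rather than technical. So the verdict is: the proposal is incomplete at an essential step, but your diagnosis of where the proof must work (and where the positivity of $c$ must enter quantitatively) is more penetrating than the paper's proof, which quietly changes what convergence of the subsequence means in order to make the inclusion nearly immediate.
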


\begin{proof}
Let $ \xi \in I\mbox{-}st\mbox{-}LIM_x^r $. Let $\varepsilon>0$ be given. Since $ \xi \in I\mbox{-}st\mbox{-}LIM_x^r $ we have
\begin{center}
$I\mbox{-}\underset{n\rightarrow \infty}{\lim}\frac{1}{n}\left\{k\leq n:\left|x_k-\xi\right|\geq r+\varepsilon\right\}=0$
\end{center}
\begin{equation}
\Rightarrow I\mbox{-}\underset{n\rightarrow \infty}{\lim}\frac{1}{n}\left\{k\leq n:\left|x_k-\xi\right|<r+\varepsilon\right\}=1.~~~~~~
\end{equation}
Again by the given condition we have
\begin{equation}
 I\mbox{-}\underset{n\rightarrow \infty}{\lim} \frac{1}{n} | \{ j_k \leq n : k \in\mathbb{N}\}|=1.~~~~~~~~~~
\end{equation}
 Let $A=\left\{j_k:k\in\mathbb N\right\}$. Then by (4.4) we have $d_I(A)=1$ and so $d_I( \mathbb{N} \setminus A) = 0 $ . Then from (4.3) we have
\begin{equation}
I\mbox{-}\underset{n\rightarrow \infty}{\lim}\frac{1}{n}\left\{j_k\leq n:\left|x_{j_k}-\xi\right|< r+\varepsilon\right\}=1.~~~~~~
\end{equation}
Now
\begin{equation}
1\geq \frac{1}{n}\left|\left\{k\leq n:\left|x_{j_k}-\xi\right|<r+\varepsilon\right\}\right|=\frac{1}{n}\left|\left\{j_k\leq j_n:\left|x_{j_k}-\xi\right|<r+\varepsilon\right\}\right|.~~~~~~~~~~~~~~~~~~~~~~
\end{equation}
Also
\begin{equation}
 \frac{1}{n}\left|\left\{j_k\leq j_n:\left|x_{j_k}-\xi\right|<r+\varepsilon\right\}\right|\geq \frac{1}{n}\left|\left\{j_k\leq n:\left|x_{j_k}-\xi\right|<r+\varepsilon\right\}\right|.~~~~~~~~~
\end{equation}
Thus from (4.5), (4.6), (4.7) and by the property of $I$-convergence we have
\begin{center}
$I\mbox{-}\underset{n\rightarrow \infty}{\lim}\frac{1}{n}\left\{k\leq n:\left|x_{j_k}-\xi\right|< r+\varepsilon\right\}=1$
\end{center}
\begin{center}
$\Rightarrow I\mbox{-}\underset{n\rightarrow \infty}{\lim}\frac{1}{n}\left\{k\leq n:\left|x_{j_k}-\xi\right|\geq r+\varepsilon\right\}=0.$
\end{center}
Hence $x'$ is rough $I$-statistical convergent to $\xi$ i.e., $\xi\in I\mbox{-}st\mbox{-}LIM_{x'}^r$. Since $\xi$ is arbitrary, we have $ I\mbox{-}st\mbox{-}LIM_x^r \subset
I\mbox{-}st\mbox{-}LIM_{x'}^r $.
\end{proof}
\begin{thm}
Let $ x=\left\{x_k\right\}_{k\in\mathbb N} $ be sequence in $X$
and $ r > 0 $ be a real number. Then the rough $I$-statistical
limit set of the sequence $ x $ i.e., the set $
I\mbox{-}st\mbox{-}LIM_x^r $ is closed.
\end{thm}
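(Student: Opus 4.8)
The plan is to show that $I\mbox{-}st\mbox{-}LIM_x^r$ contains all of its limit points, which (if the set is nonempty; the empty case being trivially closed) amounts to verifying that the limit of any norm-convergent sequence drawn from the set again lies in the set. Concretely, I would take a sequence $\{y_n\}_{n\in\mathbb N}$ with each $y_n\in I\mbox{-}st\mbox{-}LIM_x^r$ and $y_n\to y$ in $(X,\|.\|)$, and aim to prove $y\in I\mbox{-}st\mbox{-}LIM_x^r$, i.e. that for arbitrary $\varepsilon>0$ and $\delta>0$ the set $\{n\in\mathbb N:\frac1n|\{k\le n:\|x_k-y\|\ge r+\varepsilon\}|\ge\delta\}$ belongs to $I$.

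The key step is a triangle-inequality comparison that transfers membership from $y_{n_0}$ (a fixed term of the approximating sequence) to $y$. Fixing $\varepsilon>0$, convergence $y_n\to y$ provides an index $n_0$ with $\|y_{n_0}-y\|<\varepsilon/2$. Since $y_{n_0}\in I\mbox{-}st\mbox{-}LIM_x^r$, applying the definition with roughness tolerance $\varepsilon/2$ in place of $\varepsilon$ yields
\[
\Big\{n\in\mathbb N:\tfrac1n\big|\{k\le n:\|x_k-y_{n_0}\|\ge r+\tfrac{\varepsilon}{2}\}\big|\ge\delta\Big\}\in I.
\]
For any $k$ with $\|x_k-y\|\ge r+\varepsilon$, the estimate $r+\varepsilon\le\|x_k-y\|\le\|x_k-y_{n_0}\|+\|y_{n_0}-y\|<\|x_k-y_{n_0}\|+\varepsilon/2$ forces $\|x_k-y_{n_0}\|> r+\varepsilon/2$. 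Hence, for every $n$,
\[
\{k\le n:\|x_k-y\|\ge r+\varepsilon\}\subseteq\{k\le n:\|x_k-y_{n_0}\|\ge r+\tfrac{\varepsilon}{2}\},
\]
so the associated densities satisfy the same inequality and therefore
\[
\Big\{n:\tfrac1n|\{k\le n:\|x_k-y\|\ge r+\varepsilon\}|\ge\delta\Big\}\subseteq\Big\{n:\tfrac1n|\{k\le n:\|x_k-y_{n_0}\|\ge r+\tfrac{\varepsilon}{2}\}|\ge\delta\Big\}.
\]

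The final step invokes the defining property (iii) of an ideal: the left-hand set, being a subset of a member of $I$, is itself in $I$. As $\varepsilon>0$ and $\delta>0$ were arbitrary, this gives $y\in I\mbox{-}st\mbox{-}LIM_x^r$, completing the argument. I expect the only delicate point to be the bookkeeping in the $\varepsilon/2$ split, namely ensuring the inclusion of index sets runs in the direction that lets me exploit the downward closure of the ideal (subsets of a member lie in $I$) rather than the opposite direction; getting this reversed would require the filter property instead and would not close the argument.
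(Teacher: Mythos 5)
Your proposal is correct and takes essentially the same route as the paper's proof: approximate $y$ within $\varepsilon/2$ by a fixed term $y_{n_0}$ of the sequence, invoke the definition of the rough $I$-statistical limit set with tolerance $\varepsilon/2$, transfer membership via the triangle inequality, and close with the downward closure (property (iii)) of the ideal. The only difference is cosmetic: you establish the direct inclusion $\{k \leq n : \|x_k - y\| \geq r + \varepsilon\} \subseteq \{k \leq n : \|x_k - y_{n_0}\| \geq r + \tfrac{\varepsilon}{2}\}$ between the exceptional sets, whereas the paper runs the same estimate through complements and a $1-\delta$ density count; your bookkeeping is, if anything, slightly tidier and correctly exploits the ideal's downward closure rather than the filter property.
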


\begin{proof}
If $ I\mbox{-}st\mbox{-}LIM_x^r  = \emptyset $, then nothing to prove.\\
Let us assume that $ I\mbox{-}st\mbox{-}LIM_x^r \neq \emptyset $. Now consider a sequence $\{y_k\}_{k \in \mathbb{N}}$
in $ I\mbox{-}st\mbox{-}LIM_x^r $ with $ \underset{k \rightarrow \infty}{\lim}y_k = y $.
Choose $ \varepsilon > 0 $ and $ \delta > 0 $. Then there exists $ i_{\frac{\varepsilon}{2}} \in \mathbb{N} $ such that $ \|y_k - y \| < \frac{\varepsilon}{2} $ for all $ k > i_{\frac{\varepsilon}{2}}$.
Let $ k_0 > i_{\frac{\varepsilon}{2}}$. Then $ y_{k_0} \in I\mbox{-}st\mbox{-}LIM_x^r $ and so
$ A = \{ n \in \mathbb{N} : \frac {1}{n} | \{ k \leq n: \|x_k - y_{k_0}\| \geq r + \frac{\varepsilon}{2}\}| \geq \delta \} \in I $.
Since $I$ is admissible so $ M = \mathbb{N} \setminus A $ is nonempty. Choose $ n \in M $.
Then
\begin{eqnarray*}
& &\frac{1}{n} | \{ k \leq n: \|x_k - y_{k_0} \| \geq r +
\frac{\varepsilon}{2} \} < \delta\\
&\Rightarrow & \frac{1}{n} | \{ k \leq n: \|x_k - y_{k_0} \| < r +
\frac{\varepsilon}{2} \} \geq 1 - \delta.
\end{eqnarray*}
Put $ B_n = \{ k \leq n: \|x_k - y_{k_0} \| < r +
\frac{\varepsilon}{2} \} $. Then for $ k \in B_n $,
\begin{eqnarray*}
\|x_k - y \| \leq \| x_k - y_{k_0}\| + \|y_{k_0} - y \| < r + \frac{\varepsilon}{2} + \frac{\varepsilon}{2}
= r + \varepsilon.
\end{eqnarray*}
Hence $ B_n \subset \{ k \leq n : \|x_k - y \| < r + \varepsilon
\}$, which implies $ 1-\delta \leq\frac{ |B_n|}{n} \leq
\frac{1}{n}\left|\{ k \leq n: \|x_k - y \| < r +
\varepsilon\}\right| $. Therefore $\frac{1}{n}|\{ k \leq n: \|x_k
- y \| \geq r + \varepsilon\}| < 1-(1-\delta)=\delta$. Then
\begin{eqnarray*}
\{ n: \frac{1}{n} | \{ k \leq n : \|x_k - y \| \geq r + \varepsilon \}| \geq \delta \} \subset A \in I.
\end{eqnarray*}
This shows that $ y \in I\mbox{-}st\mbox{-}LIM_x^r $. Hence $ I\mbox{-}st\mbox{-}LIM_x^r $ is a closed set.
\end{proof}
\begin{thm}
Let $ x =\left\{x_k\right\}_{k\in\mathbb N} $ be sequence in $X$
and $ r > 0 $ be a real number. Then the rough $I$-statistical
limit set $ I\mbox{-}st\mbox{-}LIM_x^r $ of the sequence $ x $ is
a convex set.
\end{thm}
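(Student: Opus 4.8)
The plan is to verify convexity directly from the definition. I take two arbitrary points $y_0,y_1\in I\mbox{-}st\mbox{-}LIM_x^r$ and a scalar $\lambda\in[0,1]$, set $y_\lambda=(1-\lambda)y_0+\lambda y_1$, and show that $y_\lambda\in I\mbox{-}st\mbox{-}LIM_x^r$. Fixing $\varepsilon>0$ and $\delta>0$, I would exploit the membership of $y_0$ and $y_1$ with the halved tolerance $\delta/2$: by definition the sets
$$A_0=\Big\{n\in\mathbb N:\tfrac1n|\{k\le n:\|x_k-y_0\|\ge r+\varepsilon\}|\ge\tfrac{\delta}{2}\Big\}$$
and
$$A_1=\Big\{n\in\mathbb N:\tfrac1n|\{k\le n:\|x_k-y_1\|\ge r+\varepsilon\}|\ge\tfrac{\delta}{2}\Big\}$$
both belong to $I$, and since $I$ is an ideal, $A_0\cup A_1\in I$.

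The key step is a pointwise inclusion coming from convexity of the norm. For any index $k$ with $\|x_k-y_0\|<r+\varepsilon$ and $\|x_k-y_1\|<r+\varepsilon$ simultaneously, I estimate
$$\|x_k-y_\lambda\|=\|(1-\lambda)(x_k-y_0)+\lambda(x_k-y_1)\|\le(1-\lambda)\|x_k-y_0\|+\lambda\|x_k-y_1\|<r+\varepsilon.$$
Taking contrapositives, this shows that for every $n$,
$$\{k\le n:\|x_k-y_\lambda\|\ge r+\varepsilon\}\subset\{k\le n:\|x_k-y_0\|\ge r+\varepsilon\}\cup\{k\le n:\|x_k-y_1\|\ge r+\varepsilon\}.$$

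Finally I combine these facts. For each $n\notin A_0\cup A_1$ the two counts on the right are each strictly less than $\delta/2$, so the inclusion gives $\tfrac1n|\{k\le n:\|x_k-y_\lambda\|\ge r+\varepsilon\}|<\delta$. Hence the set $\{n\in\mathbb N:\tfrac1n|\{k\le n:\|x_k-y_\lambda\|\ge r+\varepsilon\}|\ge\delta\}$ is contained in $A_0\cup A_1$, and because $A_0\cup A_1\in I$ while $I$ is closed under subsets, this set lies in $I$. As $\varepsilon,\delta>0$ were arbitrary, $y_\lambda\in I\mbox{-}st\mbox{-}LIM_x^r$, which proves convexity. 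There is no serious obstacle here; the only point needing care is the bookkeeping with the split tolerance $\delta/2$, which is precisely the device that makes the union bound on the densities compatible with the single threshold $\delta$.
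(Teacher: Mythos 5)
Your proof is correct, and its geometric heart coincides with the paper's: the convexity estimate $\|x_k-((1-\lambda)y_0+\lambda y_1)\|\le(1-\lambda)\|x_k-y_0\|+\lambda\|x_k-y_1\|<r+\varepsilon$ for indices $k$ that are good for both $y_0$ and $y_1$, which gives the inclusion of the bad set for $y_\lambda$ in the union of the bad sets for $y_0$ and $y_1$. Where you genuinely diverge is the density bookkeeping. The paper defines its $A_0,A_1$ as sets of indices $k$, invokes its Theorem 3.1 (in effect the subadditivity of $I$-limits of the density sequences) to conclude that $\frac1n|\{k\le n:k\in A_0\cup A_1\}|$ $I$-converges to $0$, then introduces an auxiliary $\delta_1$ with $0<1-\delta_1<\delta$, passes to complements via $A_0^c\cap A_1^c\subset B^c$, and even singles out a representative $k_0\in A_0^c\cap A_1^c$ --- a step that is superfluous, since the set inclusion alone carries the argument. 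Your route replaces all of this with the standard split-tolerance union bound: membership of $y_0$ and $y_1$ at level $\delta/2$ puts your exceptional sets of $n$'s in $I$ straight from the definition, and the counting inequality $|\{k\le n:\|x_k-y_\lambda\|\ge r+\varepsilon\}|\le|\{k\le n:\|x_k-y_0\|\ge r+\varepsilon\}|+|\{k\le n:\|x_k-y_1\|\ge r+\varepsilon\}|$ closes it out. Your version is self-contained (no appeal to Theorem 3.1), avoids the paper's slightly garbled inequalities in the $\delta_1$ maneuver (it writes $<1-\delta_1$ where $<\delta_1$ is meant), and works entirely with the ideal axioms rather than $I$-limit arithmetic; the paper's version buys only the reuse of machinery already set up in Theorem 3.1, at the cost of a longer and less transparent complement-counting argument.
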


\begin{proof}
Let $ y_0, y_1 \in I\mbox{-}st\mbox{-}LIM_x^r $ and $ \varepsilon
> 0 $ be given. Let
\begin{eqnarray*}
A_0 = \{ k \in \mathbb{N} : \| x_k - y_0 \| \geq r + \varepsilon \}~ \\
A_1 = \{ k \in \mathbb{N} : \| x_k - y_1 \| \geq r + \varepsilon \}.
\end{eqnarray*}
Then by theorem 4.3.1 for $ \delta > 0 $ we have $ \{ n :
\frac{1}{n} |\{k \leq n: k \in A_0 \cup A_1 \}| \geq \delta \} \in
I $. Choose $ 0 < {\delta}_1 < 1 $ such that $ 0 < 1 - {\delta}_1
< \delta $. Let $ A = \{ n : \frac{1}{n} |\{k \leq n: k \in A_0
\cup A_1 \}| \geq {1-\delta}_1 \}$. Then $ A \in I $. Now for each $
n \notin A $ we have
\begin{eqnarray*}
& &\frac{1}{n} |\{k \leq n: k \in A_0 \cup A_1 \}| < 1 -
{\delta}_1\\
& \Rightarrow & \frac{1}{n} |\{k \leq n: k \notin A_0 \cup A_1 \}|
\geq \left\{1-(1 - {\delta}_1)\right\}=\delta_1.
\end{eqnarray*}
Therefore $ \{ k \in \mathbb{N}: k \notin A_0 \cup A_1 \}$ is a
nonempty set. Let us take $ k_0 \in {A_0}^c \cap {A_1}^c $ and $ 0
\leq \lambda \leq 1 $. Then
\begin{eqnarray*}
\|x_{k_0} - (1 - \lambda)y_0 - {\lambda}y_1 \| & = & \|(1 -
\lambda)x_{k_0} + {\lambda}x_{k_0} - [(1 - \lambda)y_0 +
{\lambda}y_1] \| \\
& \leq & (1 - \lambda)\|x_{k_0} - y_0 \| + \lambda \|x_{k_0} -
y_1\| \\
& < & (1 - \lambda)(r + \varepsilon) + \lambda(r + \varepsilon) =
r + \varepsilon.
\end{eqnarray*}
Let $ B = \{ k \in \mathbb{N}: \|
x_k - [(1-\lambda)y_0 + {\lambda}y_1]\|\geq r + \varepsilon\}$.
Then clearly, $ {A_0}^c \cap {A_1}^c \subset B^c $. So for $ n
\notin A $,
\begin{eqnarray*}
& &{\delta}_1 \leq \frac{1}{n}|\{ k \leq n: k \notin A_0 \cup A_1
\} \leq \frac{1}{n}|\{ k \leq n: k \notin B \}\\
& \Rightarrow & \frac{1}{n}|\{ k \leq n : k \in B\}| < 1 -
{\delta}_1 < \delta.
\end{eqnarray*}
Thus $ A^c \subset \{ n : \frac{1}{n}|\{ k \leq n : k \in B\}| <
\delta \}$. Since $ A^c \in \mathbb F(I) $, so $\{ n :
\frac{1}{n}|\{ k \leq n : k \in B\}| < \delta\} \in\mathbb F(I)$
and so $\{ n : \frac{1}{n}|\{ k \leq n : k \in B\}| \geq \delta \}
\in I $. This completes the proof.
\end{proof}

\begin{thm}
Let $ r > 0 $. Then a sequence $ x=\left\{x_k\right\}_{k\in\mathbb
N} $ in $X$ is rough $I$-statistically convergent to $ \xi $ if
and only if there exists a sequence $ y = \{y_k\}_{k \in
\mathbb{N}} $ in $X$ such that $ {I\mbox{-}st\mbox{-}\lim}~ y =
\xi $ and $ \parallel x_{k} - y_{k}
\parallel \leq r $ for all $ k \in \mathbb{N} $.
\end{thm}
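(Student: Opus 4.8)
The plan is to prove the two implications separately; the converse direction is a routine triangle-inequality argument, while the forward direction hinges on an explicit construction of the sequence $y$.

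For sufficiency, suppose such a $y$ exists with $I\mbox{-}st\mbox{-}\lim y = \xi$ and $\|x_k - y_k\| \leq r$ for all $k$. Fix $\varepsilon > 0$ and $\delta > 0$. From $\|x_k - \xi\| \leq \|x_k - y_k\| + \|y_k - \xi\| \leq r + \|y_k - \xi\|$ I see that $\|y_k - \xi\| < \varepsilon$ forces $\|x_k - \xi\| < r + \varepsilon$; passing to complements within $\{k \leq n\}$ gives the inclusion $\{k \leq n : \|x_k - \xi\| \geq r + \varepsilon\} \subseteq \{k \leq n : \|y_k - \xi\| \geq \varepsilon\}$ for every $n$, and hence $\frac{1}{n}|\{k \leq n : \|x_k - \xi\| \geq r + \varepsilon\}| \leq \frac{1}{n}|\{k \leq n : \|y_k - \xi\| \geq \varepsilon\}|$. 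Consequently the level set $\{n : \frac{1}{n}|\{k \leq n : \|x_k - \xi\| \geq r + \varepsilon\}| \geq \delta\}$ is contained in $\{n : \frac{1}{n}|\{k \leq n : \|y_k - \xi\| \geq \varepsilon\}| \geq \delta\}$, which lies in $I$ because $I\mbox{-}st\mbox{-}\lim y = \xi$; the hereditary property (iii) of the ideal then places the former set in $I$, so $x_k \overset{r\mbox{-}I\mbox{-}st}{\longrightarrow} \xi$.

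For necessity, assume $x_k \overset{r\mbox{-}I\mbox{-}st}{\longrightarrow} \xi$ and construct $y$ by pulling each $x_k$ toward $\xi$ by at most $r$: set $y_k = \xi$ when $\|x_k - \xi\| \leq r$, and $y_k = x_k - r\frac{x_k - \xi}{\|x_k - \xi\|}$ when $\|x_k - \xi\| > r$, i.e.\ $y_k$ is the nearest point of the closed ball $\overline{{B_r}(x_k)}$ to $\xi$. A direct computation yields $\|x_k - y_k\| = \|x_k - \xi\| \leq r$ in the first case and $\|x_k - y_k\| = r$ in the second, so $\|x_k - y_k\| \leq r$ holds for every $k$ as required.

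The crucial observation is that $\|y_k - \xi\| = 0$ in the first case and $\|y_k - \xi\| = \|x_k - \xi\| - r$ in the second, so for each $\varepsilon > 0$ the condition $\|y_k - \xi\| \geq \varepsilon$ holds if and only if $\|x_k - \xi\| \geq r + \varepsilon$; thus the sets $\{k : \|y_k - \xi\| \geq \varepsilon\}$ and $\{k : \|x_k - \xi\| \geq r + \varepsilon\}$ coincide, and their truncated densities agree for every $n$. Rough $I$-statistical convergence of $x$ then gives $\{n : \frac{1}{n}|\{k \leq n : \|y_k - \xi\| \geq \varepsilon\}| \geq \delta\} = \{n : \frac{1}{n}|\{k \leq n : \|x_k - \xi\| \geq r + \varepsilon\}| \geq \delta\} \in I$ for all $\delta > 0$, which is exactly $I\mbox{-}st\mbox{-}\lim y = \xi$. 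I expect the main obstacle to be identifying the correct $y_k$: the tempting choice of projecting $x_k$ onto $\overline{{B_r}(\xi)}$ fails, since the distance from $x_k$ to that ball equals $\|x_k - \xi\| - r$ and is unbounded, whereas projecting $\xi$ onto $\overline{{B_r}(x_k)}$ keeps $\|x_k - y_k\| \leq r$ while reducing $\|y_k - \xi\|$ by exactly $r$, which is precisely what makes the two density level sets coincide.
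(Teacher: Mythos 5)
Your proof is correct and follows essentially the same route as the paper: the same triangle-inequality argument for sufficiency, and for necessity the identical construction $y_k = x_k + r\frac{\xi - x_k}{\|x_k - \xi\|}$ (your $x_k - r\frac{x_k - \xi}{\|x_k - \xi\|}$) with the same computations $\|x_k - y_k\| \leq r$ and $\|y_k - \xi\| = \max\{\|x_k - \xi\| - r, 0\}$. Your observation that the level sets $\{k : \|y_k - \xi\| \geq \varepsilon\}$ and $\{k : \|x_k - \xi\| \geq r + \varepsilon\}$ coincide exactly is a slight sharpening of the paper's inclusion $B_n \subset \{k \leq n : \|y_k - \xi\| < \varepsilon\}$, but the argument is otherwise the same.
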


\begin{proof}
Let $ y = \{y_k\}_{k \in \mathbb{N}}$ be a sequence in $X$, which
is $I$- statistically convergent to $\xi$ and $\|x_k - y_k \| \leq
r $ for all $k\in\mathbb N$. Then for any $ \varepsilon > 0 $ and
$ \delta > 0 $ the set $ A = \{ n \in \mathbb{N}: \frac{1}{n}|\{ k
\leq n : \|y_k - \xi \|\geq \varepsilon \} \geq \delta \} \in I $.
Let $ n \notin A $. Then
\begin{eqnarray*}
& & \frac{1}{n} |\{k \leq n : \|y_k - \xi \| \geq \varepsilon \}|
< \delta\\
& \Rightarrow & \frac{1}{n}|\{ k \leq n : \|y_k - \xi \| <
\varepsilon \} \geq 1 - \delta.
\end{eqnarray*}
Let $ B_n = \{ k \leq n : \|y_k - \xi \| < \varepsilon \}$, $
n\in\mathbb N$. Then for $ k \in B_n $, we have
\begin{center}
$\|x_k - \xi \| \leq \| x_k - y_k \| + \|y_k - \xi \| < r +
\varepsilon.$
\end{center}
Therefore,
\begin{eqnarray*}
& ~& B_n \subset \{ k \leq n: \|x_k - \xi \| < r + \varepsilon
\} \\
& \Rightarrow & \frac{|B_n|}{n} \leq \frac{1}{n} | \{ k \leq n :
\|x_k - \xi \| < r + \varepsilon \}| \\
& \Rightarrow & \frac{1}{n} |\{ k \leq n: \|x_k - \xi \| < r +
\varepsilon \}| \geq 1 - \delta \\
& \Rightarrow & \frac{1}{n} |\{ k \leq n: \| x_k - \xi \| \geq r +
\varepsilon \}| < 1 - (1 - \delta) = \delta.
\end{eqnarray*}
Therefore, $\{ n \in \mathbb{N}: \frac{1}{n}|\{ k \leq n: \|x_k -
\xi \| \geq r + \varepsilon \} \geq \delta \} \subset A $ and
since $ A \in I $, we have $ \{ n \in \mathbb{N} : \frac{1}{n}|\{
k \leq n: \|x_k - \xi \| \geq r + \varepsilon \} \in I $. Hence $
x_k \overset{r\mbox{-}I\mbox{-}st}{\rightarrow} \xi $.

Conversely, suppose that $ x_k
\overset{r\mbox{-}I\mbox{-}st}{\longrightarrow} \xi $. Then for $
\varepsilon > 0 ~~\mbox{and}~~ \delta >0 $,
\begin{eqnarray*}
A = \{ n \in \mathbb{N} : \frac{1}{n} | \{ k \leq n : \|x_k - \xi \| \geq r + \varepsilon \}| \geq \delta \} \in I.
\end{eqnarray*}
Let $ n \notin A $. Then
\begin{eqnarray*}
\frac{1}{n} | \{ k \leq n : \|x_k - \xi \| \geq r + \varepsilon \}| < \delta.
\end{eqnarray*}
\begin{eqnarray*}
\Rightarrow\frac{1}{n} | \{ k \leq n : \|x_k - \xi \| < r + \varepsilon \}| \geq 1 - \delta .
\end{eqnarray*}
Let $ B_n= \{ k \leq n : \|x_k - \xi \| < r + \varepsilon \}$. Now
we define a sequence $y=\left\{y_k\right\}_{k\in \mathbb N}$ as
follows:
\[ y_k = \left\{
  \begin{array}{l l}
    \xi & \quad \text{,if $\|x_k - \xi \| \leq r $, }\\
    x_k + r \frac{\xi - x_k}{\|x_k - \xi \|}, & \quad \text{otherwise.}
  \end{array} \right.\]\\
Then
\begin{eqnarray*}
 \|y_k -x_k \| &=& \|\xi - x_k\| \leq r ~~~~,\mbox{if}~~~\|x_k -\xi \| \leq r ,\\
               &=& r ~~~~~~~~~~~~~~~~,~~~~otherwise.
\end{eqnarray*}
Also,
\begin{eqnarray*}
 \|y_k -\xi \| & = &  \left\{
  \begin{array}{l l}
    0, & \quad \text{if $\|x_k - \xi \| \leq r $ }\\
    \|x_k - \xi  + r \frac{\xi - x_k}{\|x_k - \xi \|}\|, & \quad \text{otherwise.}
  \end{array} \right.\\
  \\
& = &  \left\{
  \begin{array}{l l}
    0, & \quad \text{if $\|x_k - \xi \| \leq r $ }\\
    \| x_k - \xi \| - r , & \quad \text{otherwise.}
  \end{array} \right.\\
\end{eqnarray*}
Let $ k \in B_n $. Then
\begin{eqnarray*}
\|y_k - \xi \| & = &  0 , ~~\mbox{if}~~\|x_k -\xi \| \leq r,\\
& < & \varepsilon,   ~~\mbox{if}~~ r < \|x_k - \xi \| < r +
\varepsilon.
\end{eqnarray*}
Therefore
\begin{eqnarray*}
& & B_n \subset \{ k \leq n : \|y_k - \xi \| < \varepsilon \}\\
& \Rightarrow & \frac{|B_n|}{n} \leq \frac{1}{n} |\{ k \leq n:
\|y_k - \xi\| < \varepsilon\} | \\
& \Rightarrow & \frac{1}{n}|\{k \leq n: \|y_k - \xi \| <
\varepsilon \}| \geq 1 - \delta \\
&\Rightarrow & \frac{1}{n}|\{k \leq n: \|y_k - \xi \| \geq
\varepsilon \}| < 1 -(1 - \delta) = \delta .
\end{eqnarray*}
Thus $\{n \in \mathbb{N}: \frac{1}{n}|\{k \leq n : \|y_k - \xi \|
\geq \varepsilon | \geq \delta \} \subset A $. Since $ A \in I $,
$ \{ n \in \mathbb{N}: \frac{1}{n}|\{ k \leq n: \|y_k - \xi \|
\geq \varepsilon \}| \geq \delta \} \in I $ and so $
{I\mbox{-}st\mbox{-}\lim}~ y = \xi $.
\end{proof}

\begin{defn}
A point $\lambda \in X $ is said to be an $I$-statistical cluster point of a sequencs $x$ in $X$ if for any $\varepsilon > 0 $
\begin{eqnarray*}
d_I(\{k:\|x_k - \lambda \| < \varepsilon \}) \neq 0
\end{eqnarray*}
where $ d_I(A) = I - \underset{n \rightarrow \infty}{\lim}
\frac{1}{n}|\{ k \leq n : k \in A \}$, if exists.
\end{defn}
The set of $I$-statistical cluster point of $x$ is denoted by
${\Lambda}_x^S(I) $.
\begin{thm}
Let $x=\left\{x_k\right\}_{k\in\mathbb N}$ be a sequence in $X$ and $ c \in {\Lambda}_x^S(I) $. Then $ \| \xi - c \| \leq r $ for all $ \xi \in I\mbox{-}st\mbox{-}LIM_x^r $.
\end{thm}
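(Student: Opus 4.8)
The plan is to argue by contradiction, exploiting the opposing pulls of the two hypotheses: a cluster point $c$ attracts an $I$-statistically non-negligible block of indices, while a rough $I$-statistical limit $\xi$ must push all but an $I$-statistically negligible block of indices out to distance exceeding $r+\varepsilon$. If $\xi$ were to sit strictly farther than $r$ from $c$, these two demands would collide. So suppose $\xi \in I\mbox{-}st\mbox{-}LIM_x^r$ but $\|\xi - c\| > r$.

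First I would fix $\varepsilon = \frac{1}{2}(\|\xi - c\| - r) > 0$, so that $\|\xi - c\| = r + 2\varepsilon$, and set
$$A = \{k \in \mathbb{N} : \|x_k - c\| < \varepsilon\}, \qquad B = \{k \in \mathbb{N} : \|x_k - \xi\| \geq r + \varepsilon\}.$$
A single triangle-inequality estimate does the geometric work: for $k \in A$ we have $\|x_k - \xi\| \geq \|\xi - c\| - \|x_k - c\| > (r + 2\varepsilon) - \varepsilon = r + \varepsilon$, so $k \in B$. Thus $A \subseteq B$, meaning that every index at which the sequence clusters near $c$ is forced into the exceptional set for $\xi$.

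Next I would translate both hypotheses into statements about $I$-natural density. From $\xi \in I\mbox{-}st\mbox{-}LIM_x^r$, applied with this $\varepsilon$ and arbitrary $\delta > 0$, the set $\{n : \frac{1}{n}|\{k \leq n : k \in B\}| \geq \delta\}$ lies in $I$, which is exactly $d_I(B) = 0$. Since $A \subseteq B$ gives $\frac{1}{n}|\{k \leq n : k \in A\}| \leq \frac{1}{n}|\{k \leq n : k \in B\}|$ for every $n$, for each $\delta > 0$ we get the threshold-by-threshold containment
$$\{n : \frac{1}{n}|\{k \leq n : k \in A\}| \geq \delta\} \subseteq \{n : \frac{1}{n}|\{k \leq n : k \in B\}| \geq \delta\} \in I.$$
Because $I$ is an ideal, hence closed under subsets, the left-hand set also belongs to $I$, so $d_I(A) = 0$. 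But $c \in {\Lambda}_x^S(I)$ forces $d_I(A) = d_I(\{k : \|x_k - c\| < \varepsilon\}) \neq 0$, the desired contradiction; therefore $\|\xi - c\| \leq r$ for every $\xi \in I\mbox{-}st\mbox{-}LIM_x^r$.

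The only point demanding care is the passage from $A \subseteq B$ and $d_I(B) = 0$ to $d_I(A) = 0$: one must verify that the $I$-limit defining $d_I(A)$ genuinely exists and equals zero, rather than merely bounding some upper density. The containment displayed above, together with the subset-closure of the ideal applied at each level $\delta$, delivers precisely this — it both forces the $I$-limit to exist and pins its value at $0$ — so beyond this bit of bookkeeping I anticipate no serious obstacle.
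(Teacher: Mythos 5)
Your proposal is correct and follows essentially the same route as the paper: the same choice $\varepsilon = \frac{1}{2}(\|\xi-c\|-r)$, the same triangle-inequality containment $\{k:\|x_k-c\|<\varepsilon\}\subseteq\{k:\|x_k-\xi\|\geq r+\varepsilon\}$, and the same contradiction between $d_I\neq 0$ at the cluster point and $d_I=0$ forced by the rough limit. Your only addition is to spell out, threshold by threshold in $\delta$ using the ideal's closure under subsets, why the containment transfers $d_I=0$ to the smaller set --- a detail the paper passes over with ``by (3) we have,'' so this is a welcome but not substantively different elaboration.
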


\begin{proof}
If possible, let there exist $ \xi \in I\mbox{-}st\mbox{-}LIM^r_x
$ such that $\| \xi - c \| > r $. Let $ \varepsilon = \frac{\| \xi
- c \| - r}{2} $. Then,
\begin{eqnarray}
\{ k \in \mathbb{N}: \|x_k - \xi \| \geq r + \varepsilon \} \supset \{ k \in \mathbb{N}: \|x_k - c \| < \varepsilon \}.
\end{eqnarray}
Since $ c \in {\Lambda}_x^S(I) $, so $ d_I(\{k:\|x_k - c \| <
\varepsilon \}) \neq 0 $. Hence by (4.8) we have $ d_I(\{k:\|x_k -
\xi \| \geq r + \varepsilon \}) \neq 0 $, which contradicts that $
\xi \in I\mbox{-}st\mbox{-}LIM_x^r $. Hence $ \|\xi - c \| \leq r
$.
\end{proof}

\begin{defn}
A sequence $ x=\left\{x_k\right\}_{k\in\mathbb N} $ in $X$ is said
to be $ I $- statistically bounded if there exists a positive
number $ G $ such that for any $ \delta > 0 $ the set $ A = \{n
\in \mathbb{N}: \frac{1}{n}|\{ k \leq n : \|x_k\| \geq G \}| \geq
\delta\}\in I $.
\end{defn}


\begin{thm}
A sequence $ x=\left\{x_k\right\}_{k\in\mathbb N} $ in $X$ is
$I$-statistically bounded if and only if there exists a non
negative real number $ r > 0 $ such that $
I\mbox{-}st\mbox{-}LIM_x^r \neq \emptyset $.
\end{thm}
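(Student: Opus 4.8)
The plan is to prove both implications directly, by comparing the relevant counting sets and then invoking the hereditary property of the ideal $I$ (namely, that any subset of a member of $I$ is again a member of $I$). No density limits need to be computed; everything reduces to two elementary set inclusions.

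First I would treat the ``only if'' part. Assume $x$ is $I$-statistically bounded, so there is a $G>0$ with $\{n\in\mathbb{N}:\frac1n|\{k\le n:\|x_k\|\ge G\}|\ge\delta\}\in I$ for every $\delta>0$. I claim the zero element $\theta$ of $X$ lies in $I\mbox{-}st\mbox{-}LIM_x^{G}$, which gives $I\mbox{-}st\mbox{-}LIM_x^{G}\neq\emptyset$ for the choice $r=G$. Indeed, for any $\varepsilon>0$ we have $G+\varepsilon>G$, so $\{k\le n:\|x_k-\theta\|\ge G+\varepsilon\}=\{k\le n:\|x_k\|\ge G+\varepsilon\}\subseteq\{k\le n:\|x_k\|\ge G\}$, and hence $\{n:\frac1n|\{k\le n:\|x_k-\theta\|\ge G+\varepsilon\}|\ge\delta\}\subseteq\{n:\frac1n|\{k\le n:\|x_k\|\ge G\}|\ge\delta\}\in I$. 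Since $I$ is closed under taking subsets, the left-hand set is in $I$ for all $\varepsilon,\delta>0$, i.e.\ $\theta\in I\mbox{-}st\mbox{-}LIM_x^{G}$.

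For the converse, suppose $I\mbox{-}st\mbox{-}LIM_x^{r}\neq\emptyset$ for some $r>0$ and fix $\xi\in I\mbox{-}st\mbox{-}LIM_x^{r}$. Fixing any $\varepsilon>0$ (say $\varepsilon=1$), I would set $G=\|\xi\|+r+\varepsilon$ and show this $G$ witnesses $I$-statistical boundedness. By the triangle inequality $\|x_k-\xi\|\ge\|x_k\|-\|\xi\|$, so $\|x_k\|\ge G$ forces $\|x_k-\xi\|\ge r+\varepsilon$; thus $\{k\le n:\|x_k\|\ge G\}\subseteq\{k\le n:\|x_k-\xi\|\ge r+\varepsilon\}$ for each $n$. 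Comparing the associated densities and using that $\xi$ is a rough $I$-statistical limit gives $\{n:\frac1n|\{k\le n:\|x_k\|\ge G\}|\ge\delta\}\subseteq\{n:\frac1n|\{k\le n:\|x_k-\xi\|\ge r+\varepsilon\}|\ge\delta\}\in I$ for every $\delta>0$, and the hereditary property of $I$ again places the smaller set in $I$. This is exactly the $I$-statistical boundedness of $x$.

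Neither direction presents a genuine analytic obstacle; the whole argument rests on the two inclusions above together with the fact that $I$ is an ideal. The only point that needs a little care is the bookkeeping of the additive constant: in the forward direction the slack $G+\varepsilon>G$ makes the inclusion automatic, while in the converse one must absorb both $\|\xi\|$ and the chosen $\varepsilon$ into $G$ so that the triangle-inequality inclusion holds. I therefore expect the two choices of the bound --- $r=G$ in one direction and $G=\|\xi\|+r+\varepsilon$ in the other --- to be the crux of the proof.
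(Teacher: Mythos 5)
Your proof is correct and follows essentially the same route as the paper: in the forward direction both arguments exhibit the origin as a rough $I$-statistical limit (you take $r=G$ outright and use the hereditary property of $I$, while the paper takes the slightly sharper radius $r'=\sup\{\|x_k\|:k\in A^c\}\leq G$), and in the converse both rely on the triangle-inequality inclusion $\{k\leq n:\|x_k\|\geq G\}\subseteq\{k\leq n:\|x_k-\xi\|\geq r+\varepsilon\}$ with $G$ absorbing $\|\xi\|+r+\varepsilon$. Your fixed choice $\varepsilon=1$ is actually a small improvement over the paper's choice $\varepsilon=\|\xi\|$, which silently breaks down when $\xi$ is the origin (since then $\varepsilon$ is not positive).
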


\begin{proof}
Let $ x=\left\{x_k\right\}_{k\in\mathbb N} $ be an
$I$-statistically bounded sequence in $X$. Then there exists a
positive real number $G$ such that for $\delta > 0 $ we have $\{ n
: \frac{1}{n}|\{ k \leq n: \|x _k \| \geq G \}| \geq \delta \} \in
I $. Let $ A = \{ k : \|x_k \| \geq G \} $. Then $
I\mbox{-}\underset{ n \rightarrow \infty}{\lim}\frac{1}{n}|\{ k
\leq n: k \in A \}| = 0 $. Let $ r' = sup\{\| x_k\| : k \in
A^c\}$. Then the set $ I\mbox{-}st\mbox{-}LIM^{r'}_x $ contains
the origin. So we have $ I\mbox{-}st\mbox{-}LIM^r_x \neq \emptyset
$ for $ r = r'$.

Conversely, let $ I\mbox{-}st\mbox{-}LIM_x^r \neq \emptyset $ for
some $ r > 0 $. Let $ \xi \in I\mbox{-}st\mbox{-}LIM_x^r $. Choose
$\varepsilon = \| \xi \| $. Then for each $ \delta
> 0 $, $ \{ n \in \mathbb{N}: \frac{1}{n}|\{k \leq n :
\|x_k - \xi \| \geq r + \varepsilon \}| \geq \delta \} \in I $.
Now taking $ G = r + 2 \| \xi \|$, we have $ \{ n \in \mathbb{N}:
\frac{1}{n} | \{ k \leq n : \|x_k\| \geq G \}| \geq \delta \} \in
I $. Therefore $ x $ is $I$-statistically bounded.
\end{proof}

\begin{thm}
Let $ x =\{x_k \}_{k \in \mathbb{N}}$ be a sequence in $X$ and $ r > 0 $.\\
(i) If $ c \in {{\Lambda}_x^S}(I)$, then $ I\mbox{-}st\mbox{-}LIM_x^r \subset \overline{{B_r}(c)} $.\\
(ii) $ I\mbox{-}st\mbox{-}LIM_x^r = \underset{ c \in
{\Lambda}_x^S(I) }{\bigcap}{\overline{{B_r}(c)}} = \{\xi \in X:
{\Lambda}_x^S(I)  \subset \overline{B_r(\xi)}\}$
\end{thm}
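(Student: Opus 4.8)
The plan is to read part (i) straight off Theorem 3.7 and then bootstrap part (ii) from it, isolating the one inclusion that needs genuine work. For (i), Theorem 3.7 asserts exactly that $\|\xi - c\| \leq r$ whenever $c \in \Lambda_x^S(I)$ and $\xi \in I\mbox{-}st\mbox{-}LIM_x^r$; rewriting $\|\xi - c\| \leq r$ as $\xi \in \overline{B_r(c)}$ gives $I\mbox{-}st\mbox{-}LIM_x^r \subset \overline{B_r(c)}$ immediately. The second equality in (ii) is then purely formal: $\xi \in \bigcap_{c \in \Lambda_x^S(I)} \overline{B_r(c)}$ says $\|\xi - c\| \leq r$ for every cluster point $c$, while $\Lambda_x^S(I) \subset \overline{B_r(\xi)}$ says $\|c - \xi\| \leq r$ for every such $c$; by symmetry of the norm these are the same condition, so the two sets coincide (both equalling $X$ in the degenerate case $\Lambda_x^S(I) = \emptyset$).

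For the first equality of (ii), the inclusion $I\mbox{-}st\mbox{-}LIM_x^r \subset \bigcap_c \overline{B_r(c)}$ is immediate from (i), since the left-hand set lies inside every $\overline{B_r(c)}$ and hence inside their intersection. The reverse inclusion $\bigcap_c \overline{B_r(c)} \subset I\mbox{-}st\mbox{-}LIM_x^r$ is the substantive claim, and I would prove its contrapositive. Suppose $\xi \notin I\mbox{-}st\mbox{-}LIM_x^r$; then there are $\varepsilon_0 > 0$ and $\delta_0 > 0$ for which the set $\{n : \frac{1}{n}|\{k \leq n : \|x_k - \xi\| \geq r + \varepsilon_0\}| \geq \delta_0\}$ fails to lie in $I$, so the index set $K = \{k : \|x_k - \xi\| \geq r + \varepsilon_0\}$ is not of $I$-natural density zero. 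Every term $x_k$ with $k \in K$ is at distance at least $r + \varepsilon_0$ from $\xi$, so any value $c$ about which the $K$-terms accumulate will satisfy $\|c - \xi\| \geq r + \varepsilon_0 > r$ by continuity of the norm; if such a $c$ can be shown to lie in $\Lambda_x^S(I)$, then $\xi \notin \overline{B_r(c)}$, hence $\xi \notin \bigcap_c \overline{B_r(c)}$, and the contrapositive is complete.

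The hard part will be the extraction of $c$: I must produce an honest $I$-statistical cluster point of the full sequence, not merely an accumulation value of the subsequence indexed by $K$. This calls for a compactness input ensuring the $K$-terms do accumulate somewhere, together with careful density bookkeeping to pass from $K$ having positive $I$-density in $\mathbb{N}$ to the non-vanishing of $d_I(\{k : \|x_k - c\| < \rho\})$ for every $\rho > 0$, which is what places $c$ in $\Lambda_x^S(I)$. I would carry this out by working along indices $n$ in the set $A_0 = \{n : \frac{1}{n}|\{k \leq n : k \in K\}| \geq \delta_0\}$, which does not belong to $I$ and on which $K$ occupies at least a $\delta_0$-fraction of $\{1, \dots, n\}$, extracting an accumulation point $c$ of the corresponding terms and checking that each neighbourhood of $c$ retains a portion of $K$ that is not of $I$-density zero. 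I would also flag the standing requirement that makes this possible, namely that the relevant part of the sequence be ($I$-statistically) bounded so that $\Lambda_x^S(I) \neq \emptyset$, in the spirit of Theorem 3.9: without it the extraction can fail (for an escaping sequence such as $x_k = k$ one has $\Lambda_x^S(I) = \emptyset$, the intersection over the empty family is all of $X$, yet $I\mbox{-}st\mbox{-}LIM_x^r = \emptyset$), so boundedness is exactly the hypothesis under which the chain of equalities closes.
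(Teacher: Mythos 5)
Your proposal follows the paper's own proof almost step for step: part (i) is read off the earlier estimate $\|\xi-c\|\leq r$ (that is the paper's Theorem 3.6, not 3.7, and the boundedness criterion you call Theorem 3.9 is the paper's Theorem 3.7 --- your numbering is shifted, though the content you invoke is the right one), the inclusion $I\mbox{-}st\mbox{-}LIM_x^r\subset\bigcap_{c\in\Lambda_x^S(I)}\overline{B_r(c)}$ is immediate from (i) exactly as in the paper, and the substantive inclusion is attacked by the same contrapositive: from $\xi\notin I\mbox{-}st\mbox{-}LIM_x^r$ extract $\varepsilon_0$ with $d_I(\{k:\|x_k-\xi\|\geq r+\varepsilon_0\})\neq 0$ and produce a cluster point $c$ with $\|c-\xi\|\geq r+\varepsilon_0>r$. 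One small improvement on your side: you prove the second equality of (ii) directly as a formal set identity via symmetry of the norm, whereas the paper proves only one inclusion there and closes everything through the cyclic chain $LIM\subset\bigcap\subset\{\xi:\Lambda_x^S(I)\subset\overline{B_r(\xi)}\}\subset LIM$; the logical content is the same.

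The step you isolate as ``the hard part'' --- manufacturing an honest $I$-statistical cluster point of the full sequence at distance $>r$ from $\xi$ --- is precisely where the paper's proof is thinnest: it disposes of it in a single unproved sentence (``This implies the existence of an $I$-statistical cluster point $c$ of the sequence $x$ with $\|y-c\|\geq r+\varepsilon$''). Your diagnosis that this implication needs a compactness input is correct, and your counterexample is valid: for $x_k=k$ one has $\Lambda_x^S(I)=\emptyset$, so both right-hand sets in (ii) equal $X$ by the empty-intersection convention, while $I\mbox{-}st\mbox{-}LIM_x^r=\emptyset$; the theorem as printed is therefore false without an added hypothesis. In fact $I$-statistical boundedness alone does not repair it in a general normed space: for an orthonormal sequence $x_k=e_k$ in a Hilbert space, $\|e_k-\lambda\|^2\to 1+\|\lambda\|^2$ forces $\Lambda_x^S(I)=\emptyset$, yet $0\in I\mbox{-}st\mbox{-}LIM_x^1\neq X$. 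So the honest fix is the one you gesture at: assume the relevant terms lie in a compact (for instance bounded, finite-dimensional) set, cover it by finitely many $\rho$-balls, note that if every ball captured an $I$-density-null portion of $K=\{k:\|x_k-\xi\|\geq r+\varepsilon_0\}$ then $K$ itself would be null, and diagonalize over $\rho=1/m$ to obtain $c$. You stop short of executing this extraction, but you have stopped at exactly the assertion the paper also fails to justify, and unlike the paper you have correctly identified that no proof exists there without strengthening the hypotheses.
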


\begin{proof}
(i)Let $ c \in {\Lambda}_x^S(I) $. Then by Theorem 4.3.6, for all $
\xi \in I\mbox{-}st\mbox{-}LIM_x^r $, $ \|\xi - c \| \leq r $ and
hence the result follows.

(ii) By (i) it is clear that $ I\mbox{-}st\mbox{-}LIM_r^x \subset
\underset{ c \in {\Lambda}_x^S(I) }{\bigcap}{\overline{{B_r}(c)}}
$. Now for all $ c \in {\Lambda}_x^S(I) $ and $ y \in \underset{ c
\in {\Lambda}_x^S(I) }{\bigcap}{\overline{{B_r}(c)}}$ we have $\|y
- c \| \leq r $. Then clearly $ \underset{ c \in {\Lambda}_x^S(I)
} {\bigcap}{\overline{{B_r}(c)}} \subset \{ \xi \in X:
{\Lambda}_x^S(I) \subset \overline{{B_r}(\xi)}\} $.

Now, let $ y \notin I\mbox{-}st\mbox{-}LIM_x^r $. Then there
exists an $ \varepsilon > 0 $ such that $ {d_I}(A) \neq 0 $, where
$ A = \{ k \in \mathbb{N}: \|x_k - y \| \geq r + \varepsilon \} $
. This implies the existence of an $I$-statistical cluster point
$c$ of the sequence $ x $ with $ \|y - c \| \geq r + \varepsilon
$. This gives ${\Lambda}_x^S(I) \nsubseteq \overline{{B_r}(y)}$and
so $ y \notin \{ \xi \in X: {\Lambda}_x^S(I) \subset
\overline{{B_r}(\xi)} \} $. Hence $ \{ \xi \in X: {\Lambda}_x^S(I)
\subset \overline{{B_r}(\xi)} \} \subset I
\mbox{-}st\mbox{-}LIM_x^r $. This completes the proof.

\end{proof}


\end{document}